\newcommand{\FF}{{\mathbb{F}}}
\newcommand{\GL}{{\operatorname{GL}}}
\newcommand{\SL}{{\operatorname{SL}}}
\newtheorem{thm}{Theorem}[section]
\newtheorem{lem}[thm]{Lemma}
\theoremstyle{definition}
\theoremstyle{remark}
\newtheorem{rems}[thm]{Remarks}
\begin{document}

\title[Adequate]
{Appendix A:  Adequacy of Representations of Finite Groups of Lie Type}

\date{\today}

\author{Robert Guralnick}
\address{Department of Mathematics, University of
  Southern California, Los Angeles, CA 90089-2532, USA}
\makeatletter\email{guralnic@usc.edu}\makeatother
 
 \thanks{Guralnick was partially supported by the NSF
  grant DMS-1001962 and  Simons Foundation fellowship 224965.}

\maketitle

\section{Adequacy}

Let $\rho:  G \rightarrow  \GL(V) = \GL(n,k)$ be a faithful irreducible representation of a group $G$
over an algebraically closed  field $k$ of characteristic $p > 0$.

We recall \cite{Thorne} that $\rho$ is called adequate if it satisfies the following conditions:

\begin{enumerate}
\item [(A1)] $p$ does not divide $\dim V$;
\item [(A2)] $H^1(G,k)=0 $; 
\item  [(A3)] $\mathrm{Ext}_G^1(V,V)=0$; and
\item  [(A4)] the linear span of $\{\rho(g) | g \in G,  \rho(g) \ \text{is semisimple} \}$ is 
$\mathrm{End}(V)$.
\end{enumerate}

This concept was introduced by Thorne as a weakening of the notion of big representations
used by Taylor and Wiles (see \cite{CHT}).  
Thorne \cite{Thorne} shows that adequacy can often allow one to prove
certain representations are automorphic or potentially automorphic.    See also \cite{luis}. 
If $\rho$ is not necessarily faithful but has a finite kernel of order not divisible by
the characteristic, then it makes no difference in any of the 4 conditions whether
we consider $V$ as $kG$-module or a $k\rho(G)$-module and we will
make no distinction.  In particular, it suffices to consider the simply connected groups
of Lie type.  

In \cite{GHTT}, it was shown  that if $G$ is finite and $p > 2 \dim V + 2$, then $\rho$ is always adequate
(actually a weaker condition suffices).  See \cite{G} for some further results on adequacy. 
In this appendix, we consider the finite groups of Lie type and show that the dimension condition
can be weakened considerably.   
 
We point out the following criterion for when a finite group of Lie type
has the same invariant subspaces as the algebraic group. 

\begin{lem} \label{lem:reduce} 
Let $G$ be a semisimple algebraic group over the algebraically closed field
$k$ of characteristic $p > 0$.  Let $F$ be an endomorphism of $G$
with $G^F$ finite.  Let $V$ be a finite dimensional rational $G$-module.
Let $T$ be a maximal torus of $T$ such that distinct weights for $T$ on $V$
remain distinct for $T^F$.   Then any $G^F$-submodule of $V$ is also
a $G$-submodule.
\end{lem}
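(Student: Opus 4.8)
The plan is to upgrade $T^F$-stability of a $G^F$-submodule to genuine $T$-stability, and then to combine this with $G^F$-stability to force $G$-stability. For the first step, let $W$ be a $G^F$-submodule of $V$. I would use that $T^F$ has order prime to $p$ (standard for the fixed points of a Steinberg endomorphism on a torus, since $T(k)$ has only prime-to-$p$ torsion), so that $W$ and $V$ are semisimple $T^F$-modules and each decomposes into $T^F$-eigenspaces. Decomposing $V=\bigoplus_\lambda V_\lambda$ into its $T$-weight spaces, each $V_\lambda$ sits inside the $T^F$-eigenspace attached to the character $\lambda|_{T^F}$; the hypothesis says exactly that $\lambda\mapsto\lambda|_{T^F}$ is injective on the weights occurring in $V$, so the $T^F$-eigenspace decomposition and the $T$-weight decomposition of $V$ coincide. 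Hence $W=\bigoplus_\lambda(W\cap V_\lambda)$, and since $T$ acts on $V_\lambda$ by the scalar $\lambda$ it preserves every subspace of $V_\lambda$; therefore $W$ is $T$-stable.

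Next I would pass to the stabilizer $S=\{g\in G: gW=W\}$, a closed subgroup of $G$ that by the first step contains $T$ and by hypothesis contains $G^F$. Since $G$ is generated by $T$ together with its root subgroups $U_\alpha$ ($\alpha\in\Phi$), it suffices to show $U_\alpha\le S$ for each $\alpha$. Here I would exploit that $T\le S$: for a fixed root $\alpha$ the intersection $R_\alpha=S\cap U_\alpha$ is a closed $T$-stable subgroup of $U_\alpha\cong\mathbb{G}_a$, and $T$ acts on $U_\alpha$ through the nontrivial, hence surjective, character $\alpha\colon T\to\mathbb{G}_m$ by $u_\alpha(x)\mapsto u_\alpha(\alpha(t)x)$. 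Consequently, if $R_\alpha$ contains even one element $u_\alpha(x_0)$ with $x_0\neq0$, then it contains $u_\alpha(\alpha(t)x_0)$ for all $t\in T$, i.e. $u_\alpha(y)$ for every $y\in k$, so $R_\alpha=U_\alpha$. Thus the whole problem reduces to producing, for each root $\alpha$, a single nontrivial element of $U_\alpha$ inside $S$.

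When $F$ is split this last step is immediate, since $U_\alpha^F=\{u_\alpha(x):x\in\mathbb{F}_q\}\le G^F\le S$ already contains nontrivial elements, and one concludes $S=G$ at once. The hard part will be the twisted (and very twisted) cases, where the individual root subgroups are not $F$-stable and a single unipotent element of $G^F$ genuinely mixes several $U_\beta$. There I would start from the unipotent elements of $G^F$ and conjugate them by the \emph{full} torus $T\le S$ rather than by $T^F$ alone; this smears the finitely many unipotent elements across each $U_\beta$, and the Chevalley commutator relations let one peel off the higher-height root groups and induct on height, showing that the smallest closed $T$-stable subgroup containing the unipotent part of $G^F$ is the full unipotent radical. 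Carrying this out for a maximal unipotent subgroup and its opposite yields $U_\alpha\le S$ for all $\alpha\in\Phi$, whence $S=G$ and $W$ is a $G$-submodule. I expect this disentangling via the commutator relations, uniform across the twisted types, to be the only real obstacle; everything else is forced by the weight hypothesis.
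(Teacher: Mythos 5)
Your first step---using that $T^F$ is a finite abelian group of order prime to $p$ to split $W$ into $T^F$-eigenspaces, matching these with the $T$-weight spaces via the injectivity hypothesis, and concluding that $W$ is $T$-stable---is exactly the closing paragraph of the paper's own proof. Where you genuinely diverge is in the generation step. The paper forms the Zariski closure $H$ of $\langle T, G^F\rangle$ and appeals to structure theory: a closed connected overgroup of $T$ is generated by $T$ and the root subgroups it contains (a quasi-closed subsystem), and proper possibilities are excluded either because $C_G(G^F)=Z(G)$ forces $H^\circ$ to be semisimple of full rank (leaving a short list to check), or because $G^F$ acts irreducibly on the nontrivial composition factors of the adjoint module while no proper positive-dimensional subgroup does. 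Your route---showing the stabilizer $S$ of $W$ contains each $U_\alpha$ by producing one nontrivial element and using that $T$ acts transitively on $U_\alpha\setminus\{1\}$---is correct and complete in the split case, and is more elementary there than either of the paper's sketches.

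The gap is exactly where you flag it: the twisted and very twisted cases, which are squarely within the scope of the lemma (the paper uses it for ${}^2A_n$, ${}^2D_n$, ${}^3D_4$, ${}^2E_6$ and needs the statement for general $F$ with $G^F$ finite). Your plan to conjugate the mixed unipotent elements of $G^F$ by the full torus and peel off root subgroups by height via the commutator relations is the right idea but is not yet a proof. One must actually verify that the smallest closed $T$-stable subgroup containing these orbits meets each individual $U_\alpha$ nontrivially; in characteristic $p$ a connected $T$-stable subgroup of a vector group need not be a sum of weight lines (the graph of Frobenius in $\mathbb{G}_a^2$ is $T$-stable when one character is $p$ times the other), so one needs the observation that no root is a $p$-power multiple of another, and the Suzuki--Ree cases, where long and short root subgroups are intertwined by a special isogeny in characteristic $2$ or $3$, require separate bookkeeping. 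All of this can be carried out, but it amounts to a type-by-type verification that the paper's appeal to the quasi-closed-subsystem classification (or to irreducibility on the adjoint module) handles uniformly. So: identical first half, a workable and more hands-on second half in the split case, and an honest but unfilled hole precisely at the twisted groups.
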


\begin{proof}  Let $H$ be the Zariski closure of $ \langle T, G^F \rangle$.  
We claim that $H=G$.  Clearly, we may reduce to the case that $G$ is simple. 

We sketch two proofs.  The first is to note that $H$ has
connected component generated by  $T$ and some collection of root subgroups
(with respect to $T$) corresponding to a quasi-closed system of roots.  It is straightforward
to see that no such group contains $G^F$ (since the centralizer of $G^F$
in $G$ is the center of $G$,  it follows easily that the  connected component
of $H$ is semisimple of the same rank as $G$ and there is short list of possible
such subgroups -- e.g., for groups of type $A$, there are none).

Alternatively, if $W$ is any restricted module for $G$, then $G^F$ also acts irreducibly on
$W$.  In particular,  let $\mathcal{W}$ be  the collection of nontrivial irreducible composition
factors of the adjoint module (usually there is just one).   By  inspection we see that no proper
positive dimensional subgroup acts irreducibly on all $W \in \mathcal{W}$ while $G^F$ does
(see \cite{hiss} for the structure of the Lie algebra as a module for $G$ and $G^F$).

Now suppose that  $W$ is a $G^F$-invariant subspace of $V$.  In particular, it is a direct
sum of $T^F$-weight spaces, whence it is also $T$-invariant and so also invariant under
$H = G$.   
\end{proof} 

It is more convenient to deal with simply connected simple algebraic groups $G$.  All such
groups are can be defined over $\FF_p$ and we fix such an $\FF_p$-structure.  In 
particular, if $F$ is the $q$-Frobenius map on $G$, then we write $G(q)=G^F$.

We can now easily show:

\begin{thm} \label{thm: algebraic adequacy}   Let $G$ be a simply
connected simple  algebraic group over an algebraically
closed field of characteristic $p > 0$ with
$\rho:G \rightarrow \GL(V)$ with $V$ a nontrivial irreducible rational module.   
Assume that $p$ does not divide $\dim V$.
\begin{enumerate}
\item  $\rho$ is adequate; and
\item   If $q = p^a$ is sufficiently large, then $\rho:G(q) \rightarrow \GL(V)$ is adequate.
\end{enumerate}
\end{thm}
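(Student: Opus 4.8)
The plan is to verify (A1)--(A4) in turn, first for the algebraic group and then for $G(q)$ by bootstrapping through Lemma~\ref{lem:reduce}. For part (1), (A1) is the standing hypothesis. Condition (A2) is immediate, since a semisimple group has no nontrivial homomorphism to the additive group, so $H^1(G,k)=\mathrm{Hom}(G,\mathbb{G}_a)=0$. For (A3) I would invoke the standard vanishing of self-extensions of an irreducible rational module, $\mathrm{Ext}^1_G(L(\lambda),L(\lambda))=0$ for every dominant weight $\lambda$ (Jantzen; Cline--Parshall--Scott); equivalently $H^1(G,\mathrm{End}V)=0$, which I record for later. Finally (A4): the regular semisimple elements form a dense open subset of the connected group $G$, and $\rho$ carries semisimple elements to semisimple elements, so the semisimple locus of $\rho(G)$ is dense in $\rho(G)$. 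Its linear span therefore coincides with the span of all of $\rho(G)$, which by Burnside's theorem is $\mathrm{End}(V)$ because $V$ is irreducible.

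For part (2), fix $q=p^a$ large enough that the finitely many nonzero weight differences of $T$ on $V$ remain nontrivial on $T^F$; then Lemma~\ref{lem:reduce} applies, so $V$ is irreducible as a $G(q)$-module and the $G(q)$- and $G$-submodules of any rational module agree. Condition (A1) is unchanged. For (A2), the group $G(q)$ is perfect once $q$ is large (the simply connected groups are quasisimple apart from a short list of small exceptions), so $H^1(G(q),k)=\mathrm{Hom}(G(q),k^{+})=0$.

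For (A4) over $G(q)$ I would leverage part (1). Write $D\subseteq\mathrm{End}(V)$ for the span of $\rho(T)$, the block-scalar diagonal matrices, whose dimension equals the number of distinct weights. Part (1), together with the conjugacy of maximal tori, produces finitely many $g_1,\dots,g_N\in G$ with $\sum_i g_i D g_i^{-1}=\mathrm{End}(V)$. This is a nonempty Zariski-open condition on $(g_1,\dots,g_N)\in G^N$, defined over a fixed finite field, so by the Lang--Weil estimates it is met by an $\FF_q$-point $(h_1,\dots,h_N)\in G(q)^N$ once $q$ is large. Since the weights remain distinct on $T^F$, the characters $\lambda_i|_{T^F}$ are linearly independent and $\rho(T^F)$ already spans $D$; and each $h_iT^Fh_i^{-1}=(h_iTh_i^{-1})^F$ consists of semisimple elements of $G(q)$. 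Hence the semisimple elements span $\sum_i h_i D h_i^{-1}=\mathrm{End}(V)$.

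The remaining condition (A3) is the crux, and is where I expect the real work to lie. We must show $\mathrm{Ext}^1_{G(q)}(V,V)=H^1(G(q),\mathrm{End}V)=0$ for $q$ large. The natural strategy is to compare the cohomology of the finite group with that of the algebraic group and reduce to the vanishing $H^1(G,\mathrm{End}V)=0$ recorded above. This comparison --- that for $q$ sufficiently large restriction identifies $H^1(G(q),\mathrm{End}V)$ with the generic cohomology of $G$, which in turn agrees with $H^1(G,\mathrm{End}V)$ --- is exactly the delicate point, and it is precisely here that the hypothesis ``$q$ sufficiently large'' is indispensable, since for small $q$ the Frobenius-twist contributions can make $H^1(G(q),\mathrm{End}V)$ nonzero. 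I would therefore base this step on the generic-cohomology comparison theorems of Cline--Parshall--Scott--van der Kallen and their refinements, which supply the needed isomorphism in the stable range.
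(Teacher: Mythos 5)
Your proposal is correct, and for (A1)--(A3) and for part (1) it runs along exactly the same lines as the paper: Burnside plus density of semisimple elements for (A4) over $G$, vanishing of rational self-extensions for (A3), perfectness for (A2), weight separation on the split torus to get irreducibility over $G(q)$, and the Cline--Parshall--Scott--van der Kallen generic cohomology comparison for (A3) over $G(q)$ --- the paper indeed just cites \cite{CPSv} for that step, so your identification of it as the point where ``$q$ large'' is genuinely needed matches the intended argument. The one place you diverge is (A4) for $G(q)$. The paper stays entirely inside Lemma~\ref{lem:reduce}: it arranges the weight separation on $\mathrm{End}(V)=V^*\otimes V$ (not just on $V$), observes that the span $X$ of the $G(q)$-conjugates of $T_0$ is a $G(q)$-submodule of $\mathrm{End}(V)$, hence by the lemma a $G$-submodule, and since $X$ contains the span of $T$ it must be all of $\mathrm{End}(V)$ by part (1). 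You instead extract finitely many conjugates $g_iDg_i^{-1}$ spanning $\mathrm{End}(V)$ from part (1), note that this is a nonempty open condition on $G^N$, and specialize to an $\FF_q$-point via Lang--Weil. Both arguments are valid for $q$ sufficiently large; the paper's version is slightly more economical in that it needs no point-counting input and reuses the same lemma that gives irreducibility, whereas yours only requires the weights of $V$ itself (rather than of $V^*\otimes V$) to separate on $T^F$, at the cost of invoking density of $G(q)^N$ in $G^N$. Either way the proof is complete.
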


\begin{proof}  It follows by \cite[p. 182]{Jantzen} that $\mathrm{Ext}_G^1(V,V)=0$.   
Of course, $\mathrm{End}(V)$ is the linear
span of all $\rho(g)$ and since semisimple elements in $G$ are dense, it follows that (A4) holds.  By assumption
(A1) holds and since $G$ is perfect (A2) holds.

We now prove (2).   Note for 
 $a$ sufficiently large, we have that:
\begin{enumerate}[(i)]
\item $G(p^a)$ is perfect; 
\item  $V$ is irreducible for $G(p^a)$;
\item  $\mathrm{Ext}_{G(p^a)}^1(V,V)=0$ (by \cite{CPSv});  and
\item  If  $T_0$ is a   split torus of $G(p^a)$ contained
in a maximal torus $T$ of $G$, then the eigenspaces of $T$ and $T_0$ are the same on
$\mathrm{End}(V)$ (and so also on $V$). 
\end{enumerate} 
Thus, (A1), (A2) and (A3) hold for $q=p^a$.  
It follows by (1) that any $G$-submodule of $\mathrm{End}(V)$ containing
the elements of $T$ is all of $\mathrm{End}(V)$.   The eigenvalue condition tells us that
the span of the elements of $T$ is the same as $T_0$.   By Lemma \ref{lem:reduce},  
any $G$-invariant submodule of $\mathrm{End}(V)$ is also $G(q)$-invariant, whence
the the span of the $G(q)$-conjugates of $T_0$ is all of $\mathrm{End}(V)$.
\end{proof} 

If in addition we assume that all weight spaces of $T$ are $1$-dimensional, the same proof 
shows that $\rho$ is big for the algebraic group and if $a$ is sufficiently large, then
$\rho$ is big for $G(p^a)$ as well.   The proof goes through verbatim for semisimple
groups as well.    

The result also holds for the twisted finite groups with essentially the same proof.

Before we make the result more precise for $\SL_2$ (which includes the result needed in \cite{luis}), 
we digress slightly to show that one can deduce adequacy for a group $G$ from the adequacy
of a normal subgroup $H$.  First we point out a well known and elementary fact.

\begin{lem}  \label{lem:restrict}  Let $H$ be a normal subgroup of $G$.  Let
$k$ be a field and $W$ a finite dimensional $kG$-module such that 
$H^0(H,W)=H^1(H,W)=0$.  Then $H^1(G,W)=0$.
\end{lem}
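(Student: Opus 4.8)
The cleanest approach is to invoke the inflation--restriction exact sequence attached to the normal subgroup $H \trianglelefteq G$, i.e.\ the low-degree part of the Lyndon--Hochschild--Serre spectral sequence $H^i(G/H,H^j(H,W))\Rightarrow H^{i+j}(G,W)$. Its initial segment is
\[
0 \longrightarrow H^1(G/H,W^H) \longrightarrow H^1(G,W) \longrightarrow H^1(H,W)^{G/H},
\]
where the first arrow is inflation, the second is restriction, and $W^H=H^0(H,W)$ is regarded as a $G/H$-module. This sequence is valid for an arbitrary group $G$ with normal subgroup $H$, so no finiteness hypothesis is needed. Now $H^0(H,W)=0$ gives $W^H=0$, so the left-hand term vanishes, and $H^1(H,W)=0$ makes the right-hand term vanish. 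Exactness at the middle term then forces $H^1(G,W)=0$.

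If one prefers to avoid the spectral-sequence machinery, the same conclusion follows from a direct cocycle computation, which I would carry out as follows. Let $c\colon G\to W$ be a $1$-cocycle. Its restriction to $H$ represents a class in $H^1(H,W)=0$, so after subtracting a coboundary I may assume $c$ vanishes on $H$. For $g\in G$ and $h\in H$ I would then compare $c(gh)=c(g)$ with $c(gh)=c\big((ghg^{-1})g\big)=(ghg^{-1})\cdot c(g)$, using that $c$ kills $H$ and that $ghg^{-1}\in H$. As $h$ runs over $H$, normality makes $ghg^{-1}$ run over $H$, so $c(g)$ is fixed by all of $H$; that is, $c(g)\in W^H=0$. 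Hence $c$ is identically zero, in particular a coboundary, so its class in $H^1(G,W)$ is trivial.

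I do not anticipate a genuine obstacle, since, as the statement indicates, this is an elementary and standard fact. The only points deserving a line of care are recording that the inflation--restriction sequence holds for arbitrary $G$ (or, in the cocycle version, that no finiteness is used anywhere), and observing that the coefficient module in the leftmost term is exactly $W^H$ with its induced $G/H$-action, so that the hypothesis $H^0(H,W)=0$ applies to it verbatim.
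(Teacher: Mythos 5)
Your proposal is correct, and both of your arguments go through without any gap. Your primary argument, via the five-term inflation--restriction sequence $0 \to H^1(G/H,W^H) \to H^1(G,W) \to H^1(H,W)^{G/H}$, is precisely the route the paper alludes to with the phrase ``the usual sequences in cohomology'' and then deliberately sidesteps in favor of something more elementary; it is perfectly valid and needs no finiteness hypotheses, as you note. Your backup cocycle computation is, in substance, the same argument as the paper's elementary proof, just carried out one categorical level down: the paper works with a non-split extension $0 \to W \to D \to k \to 0$, uses $H^1(H,W)=0$ to split it over $H$, uses $H^0(H,W)=0$ to make the trivial $H$-complement $W'=H^0(H,D)$ unique, and uses normality to make $W'$ a $G$-complement; your version uses $H^1(H,W)=0$ to normalize the cocycle to vanish on $H$, and then the identity $c(g)=\bigl(ghg^{-1}\bigr)\cdot c(g)$ together with $W^H=0$ (the same two hypotheses in the same roles) to kill it. The extension formulation is marginally more self-contained for a reader who does not want to fix cocycle conventions; your cochain version makes the use of normality slightly more explicit. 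Either write-up would be acceptable here.
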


\begin{proof}  This follows from the usual sequences in cohomology but we give
an elementary proof.   Suppose that $H^1(G,W) \ne 0$.  Then there exists a short
exact sequence  $0 \rightarrow W \rightarrow D \rightarrow k \rightarrow 0$
that is not split for $G$.  Since $H^1(H,W)=0$, this does split for $H$ and so
$D = W \oplus W'$ where $W'$ is a $1$-dimensional trivial $H$-module.
Since $H^0(H,W)=0$, we see that $W'=H^0(H,D)$ is unique.  Since $G$ normalizes
$H$,  $W'$ is $G$-invariant, whence $D = W \oplus W'$ as $G$-modules, a contradiction.
\end{proof}

\begin{lem} \label{lem:normal}  Let $\rho:G \rightarrow GL(n,k)= \GL(V)$ be a finite dimensional representation over the algebraically closed field $k$ of characteristic $p$. 
Let $H$ be a normal subgroup of $G$ and assume that $\rho: H \rightarrow \GL(V)$ is adequate.
If $H^1(G,k)=0$ (eg. if $G/H$ has order prime to $p$), then $\rho:G \rightarrow GL(n,k)$
is adequate.
\end{lem}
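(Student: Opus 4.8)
The plan is to verify the four conditions (A1)--(A4) for $\rho : G \to \GL(V)$ one at a time, exploiting that three of them pass almost for free from $H$ to $G$, so that the only real work lies in (A3). Throughout I would use at the outset that, since $V$ is irreducible for $H$ and $H$ is normal in $G$, it is a fortiori irreducible for $G$ (a $G$-submodule is in particular an $H$-submodule), so the desired conclusion at least makes sense.

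First the cheap conditions. Condition (A1) depends only on $\dim V$, not on the acting group, so it is inherited from $H$ verbatim. Condition (A2), namely $H^1(G,k)=0$, is precisely our hypothesis; I would also justify the parenthetical remark via the inflation--restriction sequence (Lyndon--Hochschild--Serre): when $|G/H|$ is prime to $p$ one has $H^1(G/H,k)=0$, while $H^1(H,k)=0$ by adequacy of $H$, and these two facts force $H^1(G,k)=0$. For (A4), I observe that an element $\rho(h)$ with $h \in H$ is semisimple as a linear map exactly when it is semisimple viewed inside $G$; hence the semisimple elements of $\rho(H)$ sit among those of $\rho(G)$, and the span of the latter contains the span of the former, which is already all of $\mathrm{End}(V)$ by adequacy of $H$.

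The substance is (A3), i.e. $\mathrm{Ext}_G^1(V,V) = H^1(G,\mathrm{End}(V)) = 0$, and here lies the only real obstacle. One cannot feed $W=\mathrm{End}(V)$ directly into Lemma~\ref{lem:restrict}, because $H^0(H,\mathrm{End}(V)) = \mathrm{End}_H(V) = k$ by Schur's lemma ($V$ being $H$-irreducible over the algebraically closed $k$), so the hypothesis $H^0(H,W)=0$ fails. The key step, and the one place where (A1) is genuinely needed, is to split off this trivial summand: since $p \nmid \dim V$ the identity has nonzero trace, whence $\mathrm{End}(V) = k\cdot\mathrm{id} \oplus \mathfrak{sl}(V)$ as $G$-modules, with $\mathfrak{sl}(V)$ the trace-zero endomorphisms. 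I would then treat the summands separately. For the trivial summand, $H^1(G,k\cdot\mathrm{id}) = H^1(G,k)=0$ by (A2). For $\mathfrak{sl}(V)$, I apply Lemma~\ref{lem:restrict}, whose hypotheses now do hold: $H^0(H,\mathfrak{sl}(V)) = k\cdot\mathrm{id}\cap\mathfrak{sl}(V) = 0$, and $H^1(H,\mathfrak{sl}(V))=0$ because the same decomposition gives $0 = \mathrm{Ext}_H^1(V,V) = H^1(H,k)\oplus H^1(H,\mathfrak{sl}(V))$. The lemma then yields $H^1(G,\mathfrak{sl}(V))=0$, and summing the two pieces gives $\mathrm{Ext}_G^1(V,V)=0$, completing (A3) and hence the proof.
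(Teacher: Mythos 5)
Your proof is correct and follows essentially the same route as the paper: (A1), (A2), (A4) pass immediately, and for (A3) one uses $p\nmid\dim V$ to split $\mathrm{End}(V)=k\cdot\mathrm{id}\oplus W$ with $H^0(H,W)=0$ and then applies Lemma~\ref{lem:restrict}. You have simply spelled out the details (identifying $W$ as the trace-zero endomorphisms and checking $H^1(H,W)=0$) that the paper leaves implicit.
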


\begin{proof}  Since the representation is adequate for $H$, $p$ does not divide $\dim V$.
By assumption, $H^1(G,k)=0$.  Since (A4) holds for $H$, it obviously holds for $G$.  It remains
only to show that $\mathrm{Ext}_G^1(V,V) \cong H^1(G,V^* \otimes V)=0$.

Since $p$ does not divide $\dim V$ and since $V$ is absolutely irreducible for $H$
(and so $G$), it follows that $V^* \otimes V = W \oplus k$ where  $H^0(H,W)=0$.   Since $H^1(G,k)=0$,
it  follows by Lemma \ref{lem:restrict} that $H^1(G,W)=0$ and the result follows.
\end{proof} 

See also \cite[A.1.3]{BGG}.  

\begin{thm} \label{thm:sl2}   Let $p$ be a prime,  $k$ an algebraically closed field of characteristic $p$
and $V(1)$ the natural $2$-dimensional module for $\SL_2(k)$.  If $a < p$, let $V(a)$ denote that $a$th
symmetric power of $V(1)$ (in particular,  $V(a)$ is irreducible of dimension $a+1$).
If $a < p -1$, then $V:=V(a)$ is adequate for $\SL_2(p^b)$ for any $b > 1$.
\end{thm}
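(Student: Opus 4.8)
The plan is to verify the four adequacy conditions (A1)--(A4) for $G(q):=\SL_2(q)$, $q=p^b$, sitting inside the algebraic group $G:=\SL_2(k)$, following the template of the proof of Theorem~\ref{thm: algebraic adequacy}(2) but tracking the numerical bounds so that every $b>1$ is allowed. Conditions (A1) and (A2) are immediate: since $a\le p-2$ we have $\dim V=a+1\le p-1<p$, so $p\nmid\dim V$; and $\SL_2(q)$ is perfect for $q\ge 4$, hence for all $b>1$, so $H^1(\SL_2(q),k)=0$. (Note $a\ge 1$ forces $p\ge 3$, so $p$ is odd, a parity I will use below.)

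For (A4) I would argue exactly as in Theorem~\ref{thm: algebraic adequacy}(2). The torus $T=\{\mathrm{diag}(t,t^{-1})\}$ acts on $V(a)$ with weights $a,a-2,\dots,-a$, so the distinct $T$-weights on $\mathrm{End}(V)=V(a)\otimes V(a)^{*}$ are the even integers $2m$ with $-a\le m\le a$. Two of these agree on the split torus $T_0=T\cap\SL_2(q)$ only if $p^b-1$ divides $2(m-m')$; but $|2(m-m')|\le 4a\le 4(p-2)$, while $p^2-1-4(p-2)=(p-2)^2+3>0$ gives $p^b-1\ge p^2-1>4a$ for $b\ge 2$, so distinct $T$-weights remain distinct on $T_0$. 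This is the one place where $b>1$ is essential. Now the span $S$ of the $\SL_2(q)$-conjugates of the semisimple elements of $T_0$ is an $\SL_2(q)$-submodule of $\mathrm{End}(V)$ which, by the weight coincidence, contains the elements of $T$; Lemma~\ref{lem:reduce} applied to $\mathrm{End}(V)$ makes $S$ a $G$-submodule, and by part~(1) any $G$-submodule of $\mathrm{End}(V)$ containing the elements of $T$ is everything, so $S=\mathrm{End}(V)$, which is (A4).

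The real content is (A3). Since $V(a)$ is self-dual, I must show $\mathrm{Ext}^1_{\SL_2(q)}(V,V)\cong H^1\bigl(\SL_2(q),\,V(a)\otimes V(a)\bigr)=0$ for every $b\ge 2$. By Theorem~\ref{thm: algebraic adequacy}(1) the corresponding algebraic-group group $H^1(G,V(a)\otimes V(a))$ already vanishes; the task is to transfer this to $\SL_2(q)$ uniformly in $b$, where the large-$q$ input \cite{CPSv} is not available. My plan is to split $V(a)\otimes V(a)$ into indecomposable tilting summands $T(\mu)$. Every weight of $V(a)$ is $\equiv a\pmod 2$, so every weight of the tensor square is even and only $T(\mu)$ with $\mu$ even, $0\le\mu\le 2a\le 2p-4$, can occur. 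For $\mu\le p-1$ one has $T(\mu)=L(\mu)$, while for $p\le\mu\le 2p-4$ the composition factors of $T(\mu)$ have highest weights $\mu$ and $2p-2-\mu$, again even. Thus it suffices to prove $H^1(\SL_2(q),L(\nu))=0$ for each even $\nu$ with $0\le\nu\le 2p-4$.

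This last vanishing is the main obstacle, and I would obtain it from the explicit cohomology of $\SL_2(q)$: restrict to a Borel $B_0=T_0U_0$ with $U_0\cong\F_q^{+}$ and compute the $T_0$-fixed subspace of $H^1(U_0,L(\nu))$. The delicate summands are the non-restricted ones, where $L(\nu)=L(\nu-p)\otimes L(1)^{[1]}$ carries a Frobenius twist; these are precisely what the large-$b$ comparison sidesteps. Two features, both guaranteed by the hypotheses, force the answer to be zero: every relevant $\nu$ is even, so it misses the unique dangerous restricted weight $p-2$ (odd, since $p$ is odd) that yields nonzero $H^1$; and every $\nu\le 2p-4$ is small against the Steinberg weight $q-1\ge p^2-1$, ruling out the digit patterns that could otherwise produce cohomology once $b\ge 2$. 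Summing the vanishing over all tilting summands gives (A3), and together with (A1), (A2), (A4) this shows that $V(a)$ is adequate for $\SL_2(p^b)$ whenever $b>1$.
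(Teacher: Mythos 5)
Your treatment of (A1), (A2) and (A4) is correct and follows the paper's route: the paper also verifies (A4) by comparing $T$- and $T_0$-eigenspaces on $V\otimes V$ and invoking Lemma \ref{lem:reduce}; your explicit estimate $4a\le 4(p-2)<p^2-1\le p^b-1$ is exactly the ``straightforward to see'' step and is where $b>1$ enters. The divergence, and the gap, is in (A3). The paper simply quotes \cite{AJL} for $\mathrm{Ext}^1_{\SL_2(p^b)}(V,V)=0$; you instead try to reprove this via the tilting decomposition of $V(a)\otimes V(a)$. The reduction itself is fine: $V(a)\otimes V(a)$ is tilting, its indecomposable summands $T(\mu)$ ($\mu$ even, $\mu\le 2a\le 2p-4$) have composition factors $L(\nu)$ with $\nu$ even and $\nu\le 2p-4$, and vanishing of $H^1$ on all composition factors gives vanishing on the module. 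But your proof that $H^1(\SL_2(q),L(\nu))=0$ for these $\nu$ does not hold up.

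Concretely, your parity mechanism rests on the claim that the unique dangerous restricted weight is $p-2$. That is false, and the paper's own Remark (2) refutes the parity argument: for $p=11$, $a=5$, the module $V(5)\otimes V(5)$ decomposes as $L(0)\oplus L(2)\oplus\cdots\oplus L(10)$, all composition factors of even highest weight, yet $\mathrm{Ext}^1_{\SL_2(11)}(V(5),V(5))$ is one-dimensional. The culprit is $L(p-3)=L(8)$, and $p-3$ is \emph{even} for odd $p$. (A Borel-subgroup computation for $q=p$ shows the $T_0$-weight on $H^1(U_0,L(\lambda))$ is $-\lambda-2$, so the fixed-point condition is $\lambda\equiv -2\pmod{p-1}$, i.e.\ $\lambda=p-3$, not $p-2$.) For $b\ge 2$ the weights $\lambda$ with $H^1(\SL_2(p^b),L(\lambda))\ne 0$ are of the form $(p-2)p^i+p^{i+1\bmod b}$ --- again all even --- and what actually saves the theorem is that the smallest of these is $2p-2$, which exceeds $2a\le 2p-4$. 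So parity does no work, and the statement you need is precisely the nontrivial content of the Andersen--J{\o}rgensen--Landrock (or Cline--Parshall--Scott) computation of $H^1(U_0,L(\nu))^{T_0}$ for $U_0$ elementary abelian of rank $b$ acting on a Steinberg tensor factorization $L(\nu-p)\otimes L(1)^{[1]}$; your sketch gestures at this computation but does not carry it out. Either cite \cite{AJL} as the paper does, or supply the full list of weights with nonvanishing $H^1$ for $b\ge 2$ and check that all of them exceed $2p-4$.
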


\begin{proof}  By assumption $\dim V= a +1 < p$.  
By \cite{AJL}, $\mathrm{Ext}_{\SL_2(p^b)}^1(V,V)=0$.  Since
$\SL_2(p^b)$ is perfect, $H^1(\SL_2(p^b),k)=0$.   
Let $T_0$ be a maximal split torus of $\SL_2(p^b)$
with $T$ a maximal torus of $\SL_2(k)$ containing $T_0$.  It is straightforward to see 
that distinct $T$-eigenspaces
on $V \otimes V$ are also distinct $T_0$ eigenspaces.   Let $X$ the linear span of all
$\SL_2(p^b)$ conjugates of $T_0$ in $\mathrm{End}(V)$.   
By Lemma \ref{lem:reduce}, $X$  is $\SL_2(k)$ invariant.  
Since the linear span of the elements of
$T$ is the same as that of $T_0$ (because of the eigenspace condition),  
it follows  that $X$ contains the linear span
of all semisimple elements of $\SL_2(k)$ and so $X=\mathrm{End}(V)$.  
This verifies the last condition needed for adequacy.
\end{proof}

\begin{rems}  
\begin{enumerate}
\item  If   $p=7$ and $a=5$, a computer calculation (done by Frank L\"ubeck)
shows that  (A4)  holds for $\SL_2(7)$ acting on $V(5)$.  The first two 
conditions are clear and the third follows
by \cite{AJL}, whence $V(5)$  is adequate for $\SL_2(7)$.  
\item If $p=11$ and $a=5$, then in fact   $\mathrm{Ext}_{\SL_2(11)}^1(V,V)$  
is $1$-dimensional \cite{AJL}.   Thus,  $V(5)$ is not adequate for $\SL_2(11)$.   By the theorem, 
it is adequate for $\SL_2(11^b)$ for any $b > 1$. 
\item  By Lemma \ref{lem:normal}, if $V(a)$ is adequate for $G:=\SL_2(p^b)$, it is adequate
for any overgroup of $G$ in $\GL(V(a))$ normalizing $G$.  
\item  It does seem plausible that the linear span of the semisimple elements of
$\SL_2(p)$ is all of $\mathrm{End}(V)$ for any restricted $\SL_2(p)$-module.
Indeed, we know of no example of an irreducible module $V$ in characteristic $p$  of dimension not a multiple of 
$p$  for a finite simple group $G$ where the $p'$-elements do not span $\mathrm{End}(V)$.  In fact, the only
example we know is for $G={^2}F_4(2)'$ with $\dim V = 2048$ and $p=2$.  In this case, the number of
odd order  elements in $G$ is less than $(\dim V)^2$.    
\item Since typically (but not always), $\mathrm{Ext}_S^1(V,V)=0$ for 
$S$ a quasi-simple finite group of Lie type with
$V$ an irreducible module in the natural characteristic, it seems as though very 
many absolutely irreducible
$S$-modules of dimension not a multiple of $p$ are adequate. 
\item  There are many examples constructed of absolutely irreducible modules for finite groups where
(A4) fails (and indeed (A1), (A2) and (A3) hold) but aside from the case $G={^2}F_4(2)'$, the modules are all induced modules.   
See \cite{G}. 
\end{enumerate} 
\end{rems}

One can prove \cite[Cor. 2.5.4]{CHT} on big representations
of $\SL_2$ with essentially the  same proof as above. 

\begin{lem}    Let $p$ be a prime,  $k$ an algebraically closed field of characteristic $p$
and $V(1)$ the natural $2$-dimensional module for $\SL_2(k)$.  If $a < p$, let $V(a)$ denote that $a$th
symmetric power of $V(1)$ (in particular,  $V(a)$ is irreducible of dimension $a+1$).
\begin{enumerate}
\item   If $p >  2a + 3$, then $V(a)$ is big for $\SL_2(p)$.
\item  If $p > a +1$, then $V(a)$ is big for $\SL_2(p^b)$ for any $b > 1$.
\end{enumerate}
\end{lem}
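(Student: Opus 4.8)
The plan is to rerun the argument of Theorem~\ref{thm:sl2}, verifying in addition the extra condition in the definition of a big representation \cite{CHT}: for every irreducible $\SL_2(p^b)$-submodule $W$ of $\mathrm{End}(V)$ there is a semisimple $g$ with an eigenvalue $\alpha$ of multiplicity one on $V$ such that the projection of some element of $W$ to the $\alpha$-eigenspace is nonzero. The cohomological conditions (A1)--(A3) are handled exactly as before: $\dim V=a+1<p$ gives (A1); perfectness of $\SL_2(p^b)$ gives $H^1(\SL_2(p^b),k)=0$, which is (A2); and $\mathrm{Ext}^1_{\SL_2(p^b)}(V,V)=0$ is (A3). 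For $b>1$ this Ext-vanishing holds under $p>a+1$ by \cite{AJL}, the bound already used in Theorem~\ref{thm:sl2}, while for $b=1$ one needs the sharper hypothesis $p>2a+3$ guaranteeing $\mathrm{Ext}^1_{\SL_2(p)}(V(a),V(a))=0$; this bound is genuinely necessary, since for $p=11$, $a=5$ (where $11<13=2a+3$) the self-extension group is nonzero.

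Next I would record the weight picture. The torus $T=\{\mathrm{diag}(s,s^{-1})\}$ acts on $V(a)$ with the $a+1$ distinct weights $a,a-2,\dots,-a$, each weight space one dimensional; hence all $T$-weights on $\mathrm{End}(V)=V(a)\otimes V(a)^*$ are even, lying in $\{-2a,\dots,2a\}$, and the zero-weight space is precisely the matrices diagonal in a weight basis. Choosing $s$ in the split torus $T_0\subseteq\SL_2(p^b)$ with $\mathrm{ord}(s^2)>a$, the element $g=\mathrm{diag}(s,s^{-1})$ has $a+1$ distinct eigenvalues on $V$, so each eigenspace is one dimensional and projection to the $s^{a-2i}$-eigenspace is just extraction of the $i$th diagonal entry. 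Such $s$ exists because $p>2a+1$ in part (1) and because $|T_0|=p^b-1$ is large in part (2).

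The crux is then purely about weights. In part (2) I would additionally take $\mathrm{ord}(s^2)>2a$, which is possible for $b>1$, so the eigenspace hypothesis of Lemma~\ref{lem:reduce} holds on all of $\mathrm{End}(V)$; then every irreducible $\SL_2(p^b)$-submodule $W$ of $\mathrm{End}(V)$ is $\SL_2(k)$-invariant, and $\SL_2(k)$-irreducible since the two submodule lattices coincide by the same lemma, so $W\cong V(m)$. Because $\mathrm{End}(V)$ carries only even weights, $m$ is even and $W$ has a nonzero zero-weight space, i.e.\ contains a nonzero diagonal matrix $\sum_i c_iE_{ii}$; choosing any $i$ with $c_i\neq 0$ yields a simple eigenvalue $\alpha=s^{a-2i}$ on which the projection is $c_i\neq 0$, which is exactly the required condition. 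For part (1) the split torus of $\SL_2(p)$ is too small to force the eigenspace condition on all of $\mathrm{End}(V)$, so in place of Lemma~\ref{lem:reduce} I would use that $p>2a+3>2a$ makes $V(a)\otimes V(a)$ completely reducible with decomposition $\bigoplus_{j=0}^{a}V(2j)$, each summand restricting irreducibly to $\SL_2(p)$ since $2j\le 2a<p$; every irreducible submodule is one of these $V(2j)$, again of even highest weight, and the identical diagonal-entry argument applies.

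The main obstacle I anticipate is precisely this $b=1$ discrepancy: unlike Theorem~\ref{thm:sl2}, which only treats $b>1$, the small split torus of $\SL_2(p)$ blocks a direct appeal to Lemma~\ref{lem:reduce} on $\mathrm{End}(V)$, and one must instead invoke complete reducibility of the tensor square together with the sharp self-extension bound $p>2a+3$. The remaining ingredients --- the parity observation that $\mathrm{End}(V)$ has only even weights, and the translation of the simple-eigenvalue/projection condition into the presence of a nonzero diagonal entry --- are the natural upgrade, via the one-dimensionality of the weight spaces, of the span computation already carried out for adequacy.
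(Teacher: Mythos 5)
Your proposal is correct and reaches the stated conclusions, but it is worth recording that it is substantially more detailed than the paper's own three-line proof (``Ext-vanishing by \cite{AJL}; the weight spaces of the finite torus on $V\otimes V$ agree with those of the algebraic torus; now argue as above''), and that in part (1) you genuinely depart from that argument --- necessarily so. For $b=1$ the assertion that distinct $T$-weights on $V\otimes V^*$ stay distinct on the split torus $T_0$ of order $p-1$ requires $\mathrm{ord}(s^2)=(p-1)/2>2a$, i.e.\ $p>4a+1$, which does not follow from $p>2a+3$ (for $a=3$, $p=11$ the weights $6$ and $-4$ collide on $T_0$). So Lemma~\ref{lem:reduce} cannot be applied to $\mathrm{End}(V)$ as in Theorem~\ref{thm:sl2}, and your substitute --- complete reducibility $V(a)\otimes V(a)^*\cong\bigoplus_{j=0}^{a}V(2j)$ with pairwise non-isomorphic restricted summands, so that the $\SL_2(p)$- and $\SL_2(k)$-submodule lattices coincide anyway --- is the right repair. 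Your unpacking of the bigness condition (a regular element of $T_0$ has one-dimensional eigenspaces equal to the $T$-weight spaces, so the eigenvalue--projection condition amounts to each irreducible submodule containing a matrix with a nonzero diagonal entry, i.e.\ a nonzero $T$-zero-weight vector) is exactly what the paper's ``argue as above'' intends.

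One step in part (2) needs tightening. Under $p>a+1$ one can have $2a\ge p$, and then $\mathrm{End}(V)$ has composition factors $L(m)$ with $m$ even but not restricted; by Steinberg such an $L(m)=L(m-p)\otimes L(1)^{[p]}$ has \emph{trivial} zero-weight space, so the inference ``$m$ even, hence $W$ has a nonzero zero-weight space'' is not automatic for an arbitrary irreducible $\SL_2(k)$-submodule $W\cong L(m)$. The fix is to observe that these factors never occur in the socle: $V(a)\otimes V(a)^*$ is a tilting module, and each indecomposable tilting summand $T(c)$ with $c\ge p$ has simple socle $L(2p-2-c)$ with $2p-2-c\le p-2$ restricted and even. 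With that remark every irreducible submodule is a restricted $V(m)$ with $m$ even and your diagonal-entry argument goes through; the rest of the proposal is sound.
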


\begin{proof}  It follows by \cite{AJL} that for $G=\SL_2(p^b)$
under the conditions described above we have $\mathrm{Ext}_G^1(V(a), V(a))=0$.
Under the hypotheses above, all weight space of a maximal torus $T$ of $G$
are $1$-dimensional and that the weight spaces on $V \otimes V$ are the same
as for a maximal torus $S\ge T$ of the algebraic group.   Now argue as above.
\end{proof}

If $p$ is large, we can prove a somewhat more precise result. 
We only prove this for the split form of $G$.  One could prove a variant
for the other forms.

\begin{thm} \label{thm: p large}  Let $G$ be a simple algebraic group over the 
algebraically closed field $k$ of characteristic $p \ge  4(h-1)$
where $h$ is the Coxeter number of $G$.    
Let $r$  be a positive integer.  Let $\lambda$ be a dominant weight for $G$  that is $p^r$ restricted.
 Let $V:=V(\lambda)$ be the irreducible $G$-module
with high weight $\lambda$.   If  $s  > r$, then $V$ is adequate for $G(p^s)$ if and only 
if $p$ does not divide $\dim V$.
\end{thm}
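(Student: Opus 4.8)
The plan is to verify the four adequacy conditions for $G(p^s)$, after disposing of the trivial direction. If $p \mid \dim V$ then (A1) fails and $V$ cannot be adequate, so only the reverse implication has content; assume henceforth that $p \nmid \dim V$, and (as in the earlier reduction) that $G$ is simply connected. Since $h \ge 2$ we have $p \ge 4(h-1) \ge 4$, hence $p \ge 5$, so $G(p^s)$ is quasi-simple and perfect and (A2) holds, $H^1(G(p^s),k)=0$; condition (A1) is the hypothesis. Because $\lambda$ is $p^r$-restricted with $r < s$, it is $p^s$-restricted, so by Steinberg's tensor product theorem $V=L(\lambda)$ remains irreducible for $G(p^s)$ and the central kernel has order prime to $p$ (the centre has order at most $h<p$), so adequacy is genuinely in question for the finite group.

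For (A3) I would first recall that for the algebraic group a simple module has no self-extensions, $\mathrm{Ext}^1_G(L(\lambda),L(\lambda))=0$, exactly as in the proof of Theorem \ref{thm: algebraic adequacy}. The substance is to push this down to $G(p^s)$, and this does not follow formally: the remark on $V(5)$ for $\SL_2(11)$ shows that finite self-extensions can appear where algebraic ones do not. One must therefore invoke the comparison between rational and finite-group cohomology. Under the standing hypotheses ($p \ge 4(h-1)$, which in particular exceeds the usual $2(h-1)$ threshold, $\lambda$ being $p^r$-restricted, and $s>r$), the generic-cohomology results already used for \cite{CPSv} yield $\mathrm{Ext}^1_{G(p^s)}(V,V)=0$, now with the stabilisation range made explicit as $s>r$ rather than merely ``$s$ sufficiently large''. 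I expect this descent to be the deepest ingredient of the argument, since everything cohomological hinges on having the comparison theorem available in precisely this range.

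Condition (A4) is where the constant $4(h-1)$ is used, via the quantitative form of the weight argument behind Lemma \ref{lem:reduce}. Let $T$ be a split maximal torus of $G$ and $T_0 = T^F \cong (\FF_q^\times)^\ell$ the corresponding torus of $G(p^s)$, $q=p^s$; two weights of $T$ restrict to the same character of $T_0$ exactly when their difference lies in $(q-1)X(T)$. The weights of $\mathrm{End}(V)=V\otimes V^*$ are the differences $\mu-\nu$ of weights of $V$, and every weight $\mu$ of $V$ lies in the convex hull of $W\lambda$, so $|\langle\mu,\alpha^\vee\rangle|\le\langle\lambda,\alpha_0^\vee\rangle$ for every coroot $\alpha^\vee$, where $\alpha_0^\vee$ is the highest coroot. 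As $\alpha_0^\vee$ is a sum of at most $h-1$ simple coroots and $\lambda$ is $p^r$-restricted, $\langle\lambda,\alpha_0^\vee\rangle\le(h-1)(p^r-1)$. Hence for any nonzero difference $\chi$ of two weights of $\mathrm{End}(V)$ there is a coroot $\alpha^\vee$ with $0<|\langle\chi,\alpha^\vee\rangle|\le 4(h-1)(p^r-1)<p^{r+1}-1\le q-1$, the first strict inequality using $p\ge 4(h-1)$ and the last using $s>r$. Thus $\chi\notin(q-1)X(T)$, so distinct weights of $\mathrm{End}(V)$ remain distinct on $T_0$; in particular the weight spaces of $V$ stay distinct as well.

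With the eigenspace condition in hand, the remainder of (A4) runs as in Theorem \ref{thm: algebraic adequacy}(2): by Lemma \ref{lem:reduce} the $k$-span $X$ of the $G(p^s)$-conjugates of $T_0$ in $\mathrm{End}(V)$ is $G$-invariant, and since the weight spaces of $V$ are common to $T$ and $T_0$ the linear span of the image of $T$ equals that of $T_0$, so $X$ contains $T$; as the semisimple elements are dense in $G$ and span $\mathrm{End}(V)$, we conclude $X=\mathrm{End}(V)$, giving (A4). The one point to watch is that the coroot estimate must land strictly below $q-1$: retaining $\langle\lambda,\alpha_i^\vee\rangle\le p^r-1$ (rather than the weaker $<p^r$) supplies exactly the slack needed to cover the boundary case $p=4(h-1)$, $s=r+1$. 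Thus the main obstacle is locating the correct $\mathrm{Ext}^1$-comparison theorem valid for $s>r$, while the sharp constant $4(h-1)$ is pinned down by this elementary but delicate weight count on $V\otimes V^*$.
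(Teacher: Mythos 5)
Your proposal is correct and follows essentially the same route as the paper: (A1) and (A2) are immediate, (A3) is an appeal to the finite-versus-rational $\mathrm{Ext}$ comparison, and (A4) is the weight-separation argument on $V\otimes V^*$ fed into Lemma \ref{lem:reduce}. The only differences are that the paper cites \cite[Lem. 2.1]{PSS} for the weight separation where you carry out the (valid) coroot count by hand, and the precise source for $\mathrm{Ext}^1_{G(p^s)}(V,V)=0$ in the explicit range $s>r$ is \cite[Thm. 3.4]{BNP1} rather than the unquantified generic-cohomology stabilization of \cite{CPSv}.
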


\begin{proof}    It suffices to assume that $p$ does not divide 
$\dim V$ and show that $V$ is adequate for $G(p^s)$.
There is no loss of generality in assuming that $G$ is simply connected. 

By \cite[Thm. 3.4]{BNP1},   $\mathrm{Ext}_{G(p^s)}^1(V,V)=0$. 
By \cite[Lem. 2.1(d)]{PSS}, it follows that every composition factor of 
$V \otimes V^*$ is $p^{r+1}$-restricted (and so 
in particular is irreducible for $G(p^s)$).  

Let $T_0$ be a maximal split torus of $G(p^s)$ with $T$ a maximal torus of $G$ containing $T_0$. 
We  claim that for any two distinct weights $\alpha, \beta$ of $T$ on  $V \otimes V^*$,  $\alpha - \beta$ is 
not $p^{r+1}-1$ times a weight.   This follows by \cite[Lem. 2.1]{PSS}  since $p \ge 4(h-1)$.
Thus,  $\alpha \ne \beta$ on $T_0$.   

By Lemma \ref{lem:reduce}, it follows that $G(p^s)$ and $G$ have the same invariant subspaces
on $\mathrm{End}(V) = V^* \otimes V$.   Since the linear span of the elements of $T_0$ is the same
as for $T$, it follows that $\mathrm{End}(V)$ is the linear span of the $p'$-elements in $G(p^s)$.
Thus (A4) holds.   Since $G(p^s)$ is perfect, the result holds.
\end{proof}  

Note that since the Coxeter number of $\SL_2$ is $2$,  Theorem \ref{thm: p large} includes 
Theorem \ref{thm:sl2}.

We thank Frank L\"ubeck for his computer calculations and 
Dan Nakano and Len Scott for very helpful comments.   We also
thank Florian Herzig and Richard Taylor for comments on earlier drafts
of the paper.

\end{document}